\documentclass[12pt,twoside]{amsart}
\usepackage{amsmath}
\usepackage{amssymb}
\usepackage{amscd}
\usepackage{xypic}
\xyoption{all}
\setlength{\textwidth}{15.1cm}
\setlength{\evensidemargin}{0mm} \setlength{\oddsidemargin}{0mm}

\title[Curvature of Metrics on Semple Jet bundles]{Curvature of Metrics on Semple Jet bundles}

\author{Mohammad Reza Rahmati}
\address{Institut Fourier, Universitet Gronoble Alpes, Grenoble, France
\hfill\break 
\hfill\break \\
\hfill\break }
\email{mrahmati@cimat.mx}

\newcommand{\comments}[1]{}


\newtheorem{theorem}{Theorem}[section]
\newtheorem{proposition}[theorem]{Proposition}

\newtheorem{remark}[theorem]{Remark}

\keywords{}

\subjclass{}

\begin{document}

\begin{abstract}
We discuss the Morse estimates for the curvature of several metrics on Semple weighted projective bundle over a projective variety. Following Demailly works on holomorphic Morse inequalities we show an analogue of his results along the Green-Griffiths conjecture for invariant jets.
\end{abstract}

\maketitle


\section{Introduction}

We give an equivariant (jet coordinate free) version of Morse cohomology estimates of J. P. Demailly \cite{D1}, \cite{D2} for invariant k-jet metrics on Demailly-Semple bundles. The result will have the same effect toward the Green-Griffith conjecture on the entire curve locus in projective manifolds. M. Green and P. Griffiths \cite{GG} give several equivalent construction of the jet bundle together with a method of calculation of their Chern classes. They also give an application toward the Kobayashi hyperbolicity conjecture. In \cite{D1} Demailly proves the existence of global (weighted) homogeneous sections of $k$-jets by holomorphic Morse inequalities. By standard facts any entire curve $f:\mathbb{C} \to X$ is satisfied by such sections. It is preferable to give the same result using coordinate free jet bundles of Semple and Demailly. 

In this article we generalize the main result of Demailly \cite{D2} for the bundles $E_{k,m}^{GG}(V^*)$ of jet differentials of order $k$ and weighted degree $m$ to the bundles $E_{k,m}(V^*)$ of the invariant jet differentials of order $k$ and weighted degree $m$. Namely, Theorem 0.5 from \cite{D2} and Theorem 9.3 from \cite{D1} provide a lower bound $\frac{c^k}{k}m^{n+kr-1}$ on the number of the linearly independent holomorphic global sections of $E_{k,m}^{GG} V^* \bigotimes \mathcal{O}(-m \delta A)$ for some ample divisor $A$. The group $G_k$ of local reparametrizations of $(\mathbb{C},0)$ acts on the $k$-jets by orbits of dimension $k$, so that there is an automatic lower bound $\frac{c^k}{k} m^{n+kr-1}$ on the number of the linearly independent holomorphic global sections of $E_{k,m}V^* \bigotimes \mathcal{O}(-m \delta A)$.

P. Griffiths and M. Green \cite{GG} provided several equivalent methods for characterizing the bundle of jets over a projective variety $X$. Later, the techniques developed by Semple, Demailly, McQuillan, Siu and Green-Griffiths led to new conjectures in Kahler geometry. Demailly \cite{D1}, \cite{D2} shows the existence of global sections of the Green-Griffiths bundles with sufficiently high degree. 
One can define the Green-Griffiths bundle $X_k$ as 

\begin{equation}
X_k: = (J_kV \smallsetminus {0}) / \mathbb{C}^* 
\end{equation}

\noindent
where $J_k$ is the bundle of germs of $k$-jets of Taylor expansion for $f$. The bundles $\pi_k:X_k \to X$ provide a tool to study entire holomorphic curves $ f: \mathbb{C} \to X $, since such a curve has a lift to $f_{[k]}:\mathbb{C} \to  X_k $ for every $ k $. Using the notation in \cite{D1}, we write $E_{k, m}^{GG}V^* =(\pi_{k})_* \mathcal{O}_{X_k}(m)$. 

Alternatively; the Green-Griffith bundle associated to a pair $(X,V)$ where $X$ is a complex projective manifold (may be singular) and $V$ is a holomorphic subbundle of $T_X$ the tangent bundle of $X$, with $rank(V)= r$, is a prolongation sequence of projective bundles 

\begin{equation}
\mathbb{P}^{r-1} \to X_k=P({V_{k-1}}) \stackrel{\pi_k}{\longrightarrow}  X_{k-1}, \qquad k \geq 1
\end{equation} 

\noindent
obtained inductively making $X_k$ a weighted projective bundle over $X$ (see \cite{D1} and \cite{D2} for definitions). The sequence provides a tool to study the locus of nonconstant holomorphic maps $f:\mathbb{C} \to X$ such that $f'(t) \in V$. It is a conjecture due to Green-Griffiths that the total image of all these curves is included in a proper subvariety of $X$; provided $X$ is of \textbf{general type}. By general type we mean $K_{V}$ the canonical bundle of $V$ is big, cf. \cite{D1}. An alternative way to define $X_k$, is to identify its sections by the weighted homogeneous polynomials in the jet variables $\xi_1,..., \xi_k$ with weights $(1,2,...,k)$ respectively, and coefficients to be germs of analytic functions on $X$. We denote by $E_{k,m}^{GG}$ the sections of total weight $m$.

\begin{theorem}\cite{D1}
Lets $(X,V)$ be a directed projective variety such that $K_V$ is big, and let $A$ be an ample divisor. Then for $k>>1$ and $\delta \in \mathbb{Q}_+$ small enough, and $\delta \leq c (\log k)/k$, the number of sections $h^0(X,E_{k,m}^{GG} \otimes \mathcal{O}(-m \delta A))$ has maximal growth, i.e is larger than $c_km^{n+kr-1}$ for some $m \geq m_k$, where $c,c_k >0, n=\dim (X), r= \text{rank}(V)$. In particular entire curves $f:\mathbb{C} \to X$ satisfy many algebraic differential equations.
\end{theorem}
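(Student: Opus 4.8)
The plan is to transport the section-counting problem to the projectivized Green--Griffiths jet bundle and then feed the Finsler-metric curvature of equations (3)--(6) into the holomorphic Morse inequalities (Theorem 1.3). First I would realize $H^0\big(X,E_{k,m}^{GG}V^*\otimes\mathcal{O}(-m\delta A)\big)$ as the space of sections of a single line bundle. Fibrewise, $E_{k,m}^{GG}V^*$ consists of weighted homogeneous polynomials of weighted degree $m$ in $\xi_1,\dots,\xi_k\in\mathbb{C}^r$, the variable $\xi_s$ carrying weight $s$; projectivizing the fibres yields a weighted projective bundle $\pi:P_k\to X$ whose fibre has dimension $kr-1$, so that
\begin{equation}
N:=\dim_{\mathbb{C}}P_k=n+kr-1,\qquad \pi_*\mathcal{O}_{P_k}(m)=E_{k,m}^{GG}V^*.
\end{equation}
Setting $L:=\mathcal{O}_{P_k}(1)\otimes\pi^*\mathcal{O}(-\delta A)$ and letting $m$ run through multiples clearing the denominator of $\delta$, one obtains $H^0\big(X,E_{k,m}^{GG}V^*\otimes\mathcal{O}(-m\delta A)\big)\cong H^0(P_k,L^{\otimes m})$, so it suffices to exhibit a constant $c_k>0$ with $h^0(P_k,L^{\otimes m})\ge c_k m^{N}$ for some large $m$.

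Next I would apply the weak (algebraic) Morse inequality, equation (8), to $L$ on the $N$-dimensional manifold $P_k$:
\begin{equation}
h^0(P_k,L^{\otimes m})\ \ge\ h^0-h^1\ \ge\ \frac{m^{N}}{N!}\int_{P_k(L,\,\le 1)}\Big(\tfrac{i}{2\pi}\Theta_L\Big)^{N}-o(m^{N}).
\end{equation}
The whole statement is thus reduced to a strictly positive lower bound for the Morse integral on the right. I would metrize $\mathcal{O}_{P_k}(1)$ by the Finsler metric of (3)--(5) induced from a fixed smooth metric $h$ on $V$; its curvature is the one displayed in (6), a sum of a fibrewise Fubini--Study part that is positive along the fibres and a horizontal part assembled from the curvature $\Theta_{V,h}$ with the level weights $1/s$, plus the contribution $+\delta\,\pi^*\Theta_A$ of the twist. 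Since $\pi$ integrates over the $(kr-1)$-dimensional fibre, only the term of $\Theta_L^{N}$ carrying exactly $kr-1$ vertical and $n$ horizontal factors survives the push-forward $\pi_*$.

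The heart of the argument is the fibre integration of this surviving term, carried out by Demailly's probabilistic method: one integrates the vertical Fubini--Study factor against the uniform measure on the unit sphere of $\mathbb{C}^{kr}$ and replaces the horizontal quadratic forms by their sphere averages. The horizontal curvature enters at level $s$ with weight $1/s$, so the averaging produces the harmonic sum $\sum_{s=1}^{k}1/s\sim\log k$, which the $n$ horizontal factors raise to $(\log k)^n/n!$; moreover the sphere average of $\sum_{\alpha\beta}c_{ij\alpha\beta}\xi_{s\alpha}\bar{\xi}_{s\beta}$ is proportional to its trace $\sum_\alpha c_{ij\alpha\alpha}$, i.e. to the curvature of $\det V^*=K_V$. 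Combining this with the Morse bound gives, schematically,
\begin{equation}
h^0-h^1\ \ge\ \frac{m^{N}}{N!}\,\frac{(\log k)^n}{n!\,(k!)^{r}}\Big(\mathrm{vol}(K_V)-C_A\,\frac{k\,\delta}{\log k}-\varepsilon_k\Big)-o(m^{N}),
\end{equation}
where $C_A>0$ depends on $A$, $\varepsilon_k$ collects the error terms, and $\mathrm{vol}(K_V)=\int_X c_1(K_V)^n>0$ once $K_V$ is big. Here the twist $-\delta A$ acts uniformly across the $k$ jet levels, so its relative weight is of order $k/\log k$ against the $\log k$-enhanced $K_V$ term; this is exactly why the admissible range is $\delta\le c(\log k)/k$, for then $k\delta/\log k\le c$ stays bounded.

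Finally, since $K_V$ is big I would replace $h$ by a possibly singular metric on $K_V$ with analytic singularities realizing $\mathrm{vol}(K_V)>0$; the singular locus, together with the stratum $P_k(L,=1)$ where exactly one curvature eigenvalue is negative, contributes only to $\varepsilon_k$ and can be made to tend to $0$ after a smoothing/regularization of the current. Choosing $\delta\in\mathbb{Q}_+$ with $\delta\le c(\log k)/k$ and $c$ small, and then $k\gg1$, makes the bracket in the last display at least $\tfrac12\,\mathrm{vol}(K_V)>0$, which yields $c_k>0$ and hence $h^0\ge c_k m^{N}$ for some $m\ge m_k$. Feeding these sections into the vanishing Theorem 1.1 then shows that every entire curve tangent to $V$ satisfies the associated algebraic differential equations. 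I expect the genuine obstacle to be the uniform control, as $k\to\infty$, of the two error sources inside $\varepsilon_k$ --- the negative-eigenvalue stratum $P_k(L,=1)$ in the Morse integral and the singular locus of the big metric on $K_V$ --- since one must prove that both remain of lower order than the main term $(\log k)^n\,\mathrm{vol}(K_V)$; it is precisely this quantitative comparison that forces both the bigness hypothesis and the scaling $\delta\le c(\log k)/k$ to enter in an essential way.
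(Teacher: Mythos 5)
Your proposal follows essentially the same route as the paper (and as Demailly's argument that the paper recalls in equations (16)--(21)): realize the sections on the weighted projectivized jet bundle, apply the weak holomorphic Morse inequality to $\mathcal{O}(1)$ twisted by $-\delta A$ with the Finsler $k$-jet metric, and evaluate the fibre integral by sphere averaging, which produces the harmonic-sum factor $(1+\tfrac12+\cdots+\tfrac1k)^n$ and reduces the horizontal curvature to the trace $\Theta_{\det V^*}$, so that bigness of $K_V$ and the scaling $\delta \le c(\log k)/k$ give a positive Morse integral. This is correct and matches the paper's treatment, including the identification of the error-term control as the delicate quantitative point.
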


\noindent
The proof of the above theorem is mainly an estimate on the curvature of a suitable metric  ($k$-jet metric) namely $h_k$ on the Green-Griffiths jet bundles. The singularity locus for the metric $h_k$ which we denote by $\Sigma_{h_k}$ satisfies the inductive relation

\begin{equation}
\Sigma_{h_k} \subset \pi_k^{-1}(\Sigma_{h_{k-1}}) \cup D_k
\end{equation}

\noindent
where $D_k=P(T_{X_{k-1}/X_{k-2}}) \subset X_k$. The divisors $D_k$ are the singularity locus of the projective jet bundle $X_k$ and their relation with the singularity of the k-jet metric is  $\mathcal{O}_{X_k}(1)=\pi_k^*\mathcal{O}_{X_{k-1}}(1) \otimes \mathcal{O}(D_k)$. 

\begin{theorem} (\cite{D1})
Let $(X,V)$ be a compact directed manifold. If $(X,V)$ has a $k$-jet metric $h_k$ with negative jet curvature, then every entire curve $f:\mathbb{C} \to X$ tangent to $V$ satisfies $f_k(\mathbb{C}) \subset \Sigma_{h_k}$, where $\Sigma_{h_k}$ is the singularity locus of $h_k$.
\end{theorem}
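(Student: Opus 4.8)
The plan is to argue by contradiction via the Ahlfors--Schwarz lemma, which is the standard mechanism by which a strictly negative curvature hypothesis forces the degeneracy of entire curves. Let $f:\mathbb{C}\to X$ be a nonconstant entire curve tangent to $V$. By Semple's construction it lifts canonically to a map $f_{[k]}:\mathbb{C}\to X_k$ (the lift denoted $f_k$ in the statement) with $f_{[k]}'(t)\in V_{k,f_{[k]}(t)}$, and the defining tautological property of $X_k=P(V_{k-1})$ is that the line $\mathcal{O}_{X_k}(-1)_{f_{[k]}(t)}$ is precisely the one spanned by this lifted velocity. Hence $f_{[k]}'$ is a holomorphic section of $f_{[k]}^*\mathcal{O}_{X_k}(-1)$, and I can use the jet metric $h_k$ on $\mathcal{O}_{X_k}(-1)$ to define a pseudometric on the base,
$$\gamma \;=\; \|f_{[k]}'(t)\|_{h_k}^2\;\tfrac{i}{2}\,dt\wedge d\bar t,$$
which is a genuine positive metric wherever $f_{[k]}(t)\notin\Sigma_{h_k}$ and $f_{[k]}'(t)\neq0$. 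My goal is to show that such a $\gamma$ cannot be nontrivial on all of $\mathbb{C}$.

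The key computation is to convert the negative jet curvature hypothesis into a Gaussian curvature bound for $\gamma$. Writing $f_{[k]}'=s(t)\,e(t)$ with $e$ a local holomorphic frame of $f_{[k]}^*\mathcal{O}_{X_k}(-1)$ and $s$ holomorphic, the Lelong--Poincar\'e formula yields
$$-\,\tfrac{i}{2\pi}\partial\bar\partial\log\|f_{[k]}'\|_{h_k}^2 \;=\; f_{[k]}^*\,\Theta_{\mathcal{O}_{X_k}(-1),h_k}\;-\;[Z_s],$$
where $[Z_s]\geq0$ is the effective zero divisor of $s$. The definition of negative jet curvature, $\Theta_{h_k^{-1}}(\xi)\geq\epsilon|\xi|_{w_k}^2$, says exactly that $\mathcal{O}_{X_k}(1)$ has positive curvature along $V_k$, equivalently $\Theta_{\mathcal{O}_{X_k}(-1),h_k}\leq-\epsilon|\cdot|_{w_k}^2$ in those directions. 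Since $f_{[k]}'$ generates the tautological line, restricting this estimate to the direction $f_{[k]}'(t)$ bounds the curvature form of $\gamma$ from above by $-\epsilon\,\gamma$, and discarding the nonnegative term $[Z_s]$ only sharpens the inequality. Thus $\gamma$ has Gaussian curvature $\leq-\epsilon<0$ at every point where it is positive.

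With this estimate the conclusion is immediate from the Ahlfors--Schwarz lemma: a pseudometric on the \emph{entire} plane $\mathbb{C}$ whose curvature is bounded above by a strictly negative constant must vanish identically (compare with the Poincar\'e metric on $\Delta_R$ and let $R\to\infty$). Therefore $\gamma\equiv0$, i.e. $\|f_{[k]}'(t)\|_{h_k}\equiv0$. For a nonconstant $f$ the velocity $f_{[k]}'(t)$ is nonzero off a discrete set, so this norm can vanish identically only if $h_k$ itself degenerates along the curve, that is $f_{[k]}(t)\in\Sigma_{h_k}$ for all $t$; by continuity $f_k(\mathbb{C})\subset\Sigma_{h_k}$, which is the assertion. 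I expect the main technical obstacle to be the careful treatment of $\Sigma_{h_k}$: the pseudometric $\gamma$ degenerates exactly there, so both the Lelong--Poincar\'e identity and the curvature estimate must be justified across the singular locus, working on the open set $\mathbb{C}\setminus f_{[k]}^{-1}(\Sigma_{h_k})$ and using that a singular metric of negative curvature is still governed by Ahlfors--Schwarz, the degeneracy only improving the upper bound. Confirming that the lifted velocity $f_{[k]}'$ genuinely spans $\mathcal{O}_{X_k}(-1)$ and that the directional curvature bound can be read off cleanly against $w_k$ is the remaining routine but delicate point.
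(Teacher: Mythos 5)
Your proposal is correct, but note that the paper contains no proof of this statement at all: it is quoted verbatim from Demailly \cite{D1}, \cite{D2}, and your Ahlfors--Schwarz argument is precisely the proof given in that cited source (pull back $h_k$ along the lifted curve to get a pseudometric on $\mathbb{C}$, convert negative jet curvature into a Gaussian curvature bound $\leq -\epsilon'<0$, conclude by Ahlfors--Schwarz on disks $\Delta_R$ with $R\to\infty$ that the pseudometric vanishes, hence the lift lies in $\Sigma_{h_k}$). The one imprecision worth fixing is notational: $f_{[k]}'(t)$ lies in $V_k\subset T_{X_k}$, not in $\mathcal{O}_{X_k}(-1)\subset \pi_k^*V_{k-1}$, so the holomorphic section of $f_{[k]}^*\mathcal{O}_{X_k}(-1)$ you should use is the projection $(\pi_k)_*f_{[k]}'(t)=f_{[k-1]}'(t)$; correspondingly, passing from the hypothesis $\Theta_{h_k^{-1}}(\xi)\geq\epsilon|\xi|^2_{w_k}$ on $V_k$ to the bound ``curvature of $\gamma$ $\leq -\epsilon'\gamma$'' requires comparing $\|f_{[k-1]}'\|_{h_k}$ with $|f_{[k]}'|_{w_k}$, which follows from boundedness of the bundle morphism $V_k\to\mathcal{O}_{X_k}(-1)$ on the compact manifold $X_k$. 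With that substitution your argument is exactly the classical one.
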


\noindent
The theorem provides a method to read the entire curve locus from the singularities of suitable metrics on the jet bundle, see \cite{D1} and \cite{D2} for details.

Any entire curve $ f: \mathbb{C} \to X $ satisfies $Q(f_{[k]})=0$, where $f_{[k]}$ a lift of $f$ and $Q$ a global section of the jet bundle (see \cite{D1}). As the section $Q$ involves both the manifold and jet coordinates, it is desirable to show that there are enough dual global sections along the jet fibers to pair with the given section to give equations on the variety $X$. We prove a Serre duality for asymptotic sections of jet bundles. An application is also given for partial application to the Green-Griffiths conjecture.

\vspace{0.1cm}

\section{Invariant Jets vs Invariant metrics}

\vspace{0.3cm}

J. P. Demaily develops the ideas in \cite{GG} and considers the jets of differentials that are also invariant under change of coordinate on $\mathbb{C}$. The bundle $J_k \to X$ of $k$-jets of germs of parametrized curves in $X$ has its fiber at $x \in X$ the set of equivalence classes of germs of holomorphic maps $f:(\mathbb{C},0) \to (X,x)$ with equivalence relation $f^{(j)}(0)=g^{(j)}(0), \ 0 \leq j \leq k$. By choosing local holomorphic coordinates around $x$, the elements of the fiber $J_{k,x}$ can be represented by the Taylor expansion 

\begin{equation}
f(t)=tf'(0)+\frac{t^2}{2!}f''(0)+...+\frac{t^k}{k!}f^{(k)}(0)+O(t^{k+1})
\end{equation}

\noindent
Setting $f=(f_1,...,f_n)$ on open neighborhoods of $0 \in \mathbb{C}$, the fiber is 

\begin{equation}
J_{k,x}=\{(f'(0),...,f^{(k)}(0))\} = \mathbb{C}^{nk}
\end{equation}

\noindent
Lets $G_k$ be the group of local reparametrizations of $(\mathbb{C},0)$

\begin{equation}
t \longmapsto \phi(t)=a_1t+a_2t^2+...+a_kt^k+..., \qquad a_1 \in \mathbb{C}^* 
\end{equation}

\noindent
Its action on the $k$-jet vectors (2.2) is given by the following matrix multiplication

\begin{equation}
[f'(0), f''(0)/2!, ..., f^{(k)}(0)/k!].
\left[ \begin{array}{ccccc}
a_1 & a_2 & a_3 & ... & a_k \\
0 & a_1^2 & 2a_1a_2 & ... & a_1a_{k-1}+...a_{k-1}a_1\\
0 & 0 & a_1^3 & ... & 3a_1^2a_{k-2}+...\\
. & . & . & ... & .\\
0 & 0 & 0 & ... & a_1^k 
\end{array} \right ]
\end{equation}

\noindent
The group $G_k$ decomposes as $\mathbb{C}^* \times U_k$, where $U_k$ is the unipotent radical of $G_k$. Let $E_{k,m}$ be the Demaily-Semple bundle whose fiber at $x$ consists of $U_k$-invariant polynomials on the fiber coordinates of $J_k$ at $x$ of weighted degree $m$. Set $E_k=\bigoplus_m E_{k,m}$, the Demaily-Semple bundle of graded algebras of invariant jets. Then one needs to work out the calculations in \cite{D1}, \cite{D2} with an invariant metric. Toward this we examine the following metric

\begin{equation}
\left (\sum_{s=1}^k \epsilon_s \left (\ \sum_{\alpha} \mid P_{\alpha}(\xi) \mid ^2 \right )^{\frac{p}{w(P_{\alpha})}}\right )^{1/p}
\end{equation}

\noindent
where the $P_{\alpha}$ are a set of invariant polynomials in jet coordinates. The effect of this is then, the Demailly-Semple locus of the lifts of entire curves should be contained in 

\begin{equation}
\Sigma_{h_k} \subset\  \{ P_{\alpha}=0 , \ \  \forall \alpha\}
\end{equation} 

\noindent
where the jet coordinates on the fibres $J_{k,x}$ are $\xi_1,...,\xi_k \in \mathbb{C}^n$ with $\xi_i=f^{(i)}(0)$ for an entire holomorphic curve $f : \mathbb{C} \longrightarrow X$ tangent to $V$, Besides,
$\epsilon_1 \gg \epsilon_2 \gg . . . \gg \epsilon_k > 0$ are sufficiently small and $w(P_{\alpha})$ is the weight of $P_{\alpha}$.

For instance a choice of $P_{\alpha}$'s could be the Wronskians. However we slightly try to do some better choice. First lets make some correspondence between invariant jets with non-invariant ones. Lets consider a change of coordinates 

\begin{equation}
(f_1,...,f_r) \longmapsto (f_1 \circ f_1^{-1},...,f_r \circ f_1^{-1})=(t,g_2,...,g_r)=\eta
\end{equation} 

\noindent
locally defined in a neighborhood of a point, where $n-r$ coordinates $f_i$ of $f(t) \in V_{f(t)}$ are completely determined by the remaining $r$ coordinates. This makes the first coordinate to be the identity. If we differentiate in the new coordinates successively, then all the resulting fractions are invariant of degree $0$,

\begin{equation}
g_2'=\frac{f_2'}{f_1'}\circ f_1^{-1}, \qquad g_2''=\frac{f_1'f_2''-f_2'f_1''}{f_1'^3},...
\end{equation} 

\noindent
We take the $P_{\alpha}$'s to be all the polynomials that appear in the numerators of the components when we successively differentiate (2.7) with respect to $t$. An invariant metric in the first coordinates corresponds to a usual metric in the second one subject to the condition that we need to make the average under the unitary change of coordinates in $V$. It corresponds to change of coordinates on the manifold $X$ as 

\begin{equation}
(\psi \circ f)^{(k)}(0)= \psi'(0). f^{(k)}+\text{higher order terms according to epsilons }
\end{equation} 

\noindent
with $\psi$ to be unitary. That is the effect of the change of variables in $X$ has only effect as the first derivative by composition with a linear map, up to the scaling epsilon factors (cf. \cite{D1}).  

\begin{equation}
\mid (z;\xi)\mid \sim \left (\sum_s  \epsilon_s \parallel \eta_s . (\eta_{11})^{2s-1}\parallel_h^{p/(2s-1)}\right )^{1/p} =\left (\sum_s  \epsilon_s \parallel \eta_s \parallel_h^{p/(2s-1)} \right )^{1/p}\mid \eta_{11} \mid
\end{equation}

\noindent
where $\eta_s$ are the jet coordinates $\eta_s = g^{(s)}(0), 1 \leq s \leq k$, induced by $g = (t, g_2(t), . . . , g_r(t))$. The weight of $\eta_s$ can be seen by differentiating (9) to be equal $(2s-1)$ inductively. Therefore the above metric becomes similar to the metric used by Demailly in the new coordinates produced by $g$'s. We need to modify the metric in (2.10) slightly to be invariant under hermitian transformations of the vector bundle $V$. In fact the role of $\eta_{11}$ can be done by any other $\eta_{1i}$ or even any other non-zero vector. To fix this we consider 

\begin{equation}
\mid (z;\xi)\mid = \int_{\parallel v \parallel_1=1} \left (\sum_s  \epsilon_s \parallel \eta_s \parallel_h^{p/(2s-1)} \right )^{1/p}\mid <\eta_{1}.v> \mid^2
\end{equation}

\noindent
where the integration only affects the last factor making average over all vectors in $v \in V$. This will remove the former difficulty. 
The curvature is the same as for the metric in \cite{D1} but only an extra contribution from the last factor, 

\begin{equation}
\gamma_k(z,\eta)=\frac{i}{2\pi}\left ( w_{r,p}(\eta)+\sum_{lm\alpha}b_{lm\alpha}\left (\int_{\parallel v \parallel_1=1} v_{\alpha}\right )dz_l \wedge d\bar{z}_m +\sum_{s} \frac{1}{s}\frac{\mid \eta_s\mid^{2p/s}}{\sum_t \mid \eta_t \mid^{2p/t}} \sum c_{ij\alpha \beta}\frac{ \eta_{s\alpha}\bar{\eta}_{s\beta}}{\mid \eta_s \mid^2} dz_i \wedge d\bar{z}_j \right )
\end{equation}

\noindent
Namely, if $\pi_r : \mathbb{C}^{kr} \setminus {0} \longrightarrow \mathbb{P}(1^r, 2^r, . . . , k^r)$ is the canonical projection of the weighted projective space $\mathbb{P}(1^r, 2^r, ... , k^r)$ and

\begin{equation} 
\phi_{r,p}(z) := \frac{1}{p}\left (\log (\sum_{k}^{s=1}|z_s|^{\frac{2p}{s}} \right )
\end{equation}

\noindent
for some $p > 0$ then $w_{r,p}$ is the degenerate Kahler form on $\mathbb{P}(1^r, 2^r, . . . , k^r)$ with 

\begin{equation} 
\pi_r^*w_{r,p} = dd^c\phi_{r,p}.
\end{equation} 

\noindent 
We have $b_{l,m,\alpha} \in \mathbb{C}$. The contribution of the factor $\mid \eta_{11} \mid$ can be understood as the curvature of the sub-bundle of $V$ which is orthogonal complement to the remainder. Thus 

\begin{equation}
b_{lm\alpha}=c_{lm\alpha \alpha}
\end{equation} 

\noindent
where $c_{lm11}$ is read from the coefficients of the curvature tensor of $(V,w^{FS})$ the Fubini-Study metric on $V$ (the second summand in (2.12)). Then we need to look at the integral 

\begin{equation}
\int_{X_k,q}\Theta^{n+k(r-1)}=\frac{(n+k(r-1))!}{n!(k(r-1))!}\int_X \int_{P(1^r,...,k^r)}w_{a,r,p}^{k(r-1)}(\eta)1_{\gamma_k,q}(z,\eta) \gamma_k(z,\eta)^n
\end{equation}

\noindent
In the course of evaluating with the Morse inequalities the curvature form is replaced by the trace of the above tensor in raising to the power $n=\dim X$, then if we use polar coordinates

\begin{equation}
x_s=\parallel \eta_s \parallel^{2p/s}, \qquad u_s=\eta_s/\parallel \eta_s \parallel
\end{equation}

\noindent
Then the value of the curvature when integrating over the sphere yields the following

\begin{equation}
\gamma_k=\frac{i}{2\pi}\big(\sum_{lm}b_{lm\alpha} dz_l \wedge d\bar{z}_m+\sum_{s} \frac{1}{s}\sum c_{ij\lambda \lambda} u_{s\alpha}\bar{u}_{s \beta}dz_i \wedge d\bar{z}_j\big)
\end{equation}

\noindent
Because the first term is a finite sum with respect to $1 \leq  \alpha \leq r$, since $b_{lm\alpha}$ are labeled by $\alpha$, the estimates for this new form would be essentially the same as those in \cite{D1}. Therefore one expects

\begin{equation}
\int_{X_k,q}\Theta^{n+k(r-1)}=\frac{(\log k)^n}{n!(k!)^r}\left ( \int_X 1_{\gamma,q}\gamma^n +O((\log k)^{-1}) \right)
\end{equation}

\noindent
where $\gamma$ is the curvature form of $\text{det}(V^*/G_k)$ with respect to the Chern connection of the determinant of the invariant metric and $1_{\gamma,q}$ is the characteristic
function of the set of those $(z, \eta)$, at which $\gamma$ is of signature $(n-q, q)$. We have proved the following.

\begin{proposition}
The analogue of Theorem 1.1 holds if the bundle $E_{k,m}^{GG}$ is replaced by $E_{k,m}$.
\end{proposition}

\begin{proof}
The proof follows from the proof in \cite{D1} and \cite{D2}, and the formulas (2.13-2.19) above.
\end{proof}

\begin{remark}
If $P=P(f,f',...,f^{(k)})$ and $Q=Q(f,f',...,f^{(k)})$ are two local sections of the Green-Griffiths bundle, then the first invariant operator is $f \mapsto f_j'$. Define a bracket operation as follows 

\begin{equation}
[P,Q]=\big(d \log \frac{P^{1/deg(p)}}{Q^{1/deg(Q)}} \big) \times PQ=\frac{1}{deg(P)}QdP-\frac{1}{deg(Q)}PdQ
\end{equation}

\noindent
This is compatible with Merker’s baracket $[P, Q]=\deg(Q)QdP- \deg(P)P dQ$ cf. \cite{M2}. If $(V,h)$ is a Hermitian vector bundle, the equations in (2.20) define inductively $G_k$-equivariant maps 

\begin{equation}
Q_k:J_kV \to S^{k-2} V \otimes \bigwedge^2 V, \qquad Q_k(f)=[f',Q_{k-1}(f)]
\end{equation}

\noindent
The sections produced by $Q_k(f)$ generate the fiber rings of Demailly-Semple bundle. In fact taking charts on the projective fibers one can check that locally the ring that these sections generate are equal to that of $J_k/G_k$, cf. \cite{M2}.
\end{remark}

\vspace{0.3cm}

\section{Existence of global dual differential operators} 

\vspace{0.3cm}

In \cite{M1} J. Merker proves the Green-Griffiths-Lang conjecture for a generic hypersurface in $\mathbb{P}^{n+1}$. He proves for $X \subset \mathbb{P}^{n+1}(\mathbb{C})$ of degree $d$ as a generic member in the universal family

\begin{equation}
\mathfrak{X} \subset \mathbb{P}^{n+1} \times \mathbb{P}^{(\frac{n+1+d)!}{(n+1)!d!}-1}
\end{equation}

\noindent
parametrizing all such hypersurfaces, the GG-conjecture holds. In the proof by Merker for hypersurface case, the Theorem is established outside an algebraic subset $\Sigma \subset J_{\text{vert}}^n(\mathfrak{X})$ defined by vanishing of certain Wronskians, by using a result of Y. T. Siu in \cite{S}. In order to give a similar proof of GG conjecture for general $X$ one may use the following generalization.

\vspace{0.3cm}

\noindent
\textbf{Question:}
If $X \subset \mathbb{P}^{n+1}$ be a generic member of a family $\mathfrak{X}$ of projective varieties, then there are constants $c_n$ and $c_n'$ such that

\begin{equation}
T_{J_{\text{vert}}^n(\mathfrak{X})} \otimes \mathcal{O}_{\mathfrak{X}_k}(c_n) \otimes \pi_{0k}^* L^{c_n'}
\end{equation}

\noindent
is generated at every point by its global sections, where $L$ is an ample line bundle on $\mathfrak{X}$. By the analogy between microlocal differential operators and formal polynomials on the symmetric tensor algebra it suffices to show 

\begin{equation}
H^0(X_k, Sym^{\leq m'}\tilde{V}_k \otimes \mathcal{O}_{X_k}(m) \otimes \pi_{0k}^*B) \ne 0, \qquad m'>>m >>k
\end{equation}

\noindent
where $\tilde{V}_k$ is the in-homogenized $V_k$ as acting as differential operators in first order. We also wish to work over the Demailly-Semple bundle of invariant jets. To this end by a similar procedure as the former case one may check the holomorphic Morse estimates applied to the following metric on the symmetric powers.

\begin{equation}
\vert (z,\xi) \vert= \Big( \sum_{s=1}^k \epsilon_s \big( \sum_{u_i \in S^sV^*} \vert W_{u_1,...,u_s}^s \vert^2 + \sum_{iju_{\alpha}u_{\beta}}C_{iju_{\alpha}u_{\beta}} z_i\bar{z}_j u_{\alpha}\bar{u}_{\beta}\big)^{p/s(s+1)} \Big)^{1/p}
\end{equation} 

\noindent
where $W_{u_1,...,u_s}^s$ is the Wronskian 

\begin{equation}
W_{u_1,...,u_s}^s=W(u_1 \circ f,...,u_s \circ f)
\end{equation}

\noindent
and we regard the summand front the $\epsilon_s$ as a metric on $S^sV^*$. We need to find estimates for the coefficients $C_{iju_{\alpha}u_{\beta}}$. Moreover the frame $\langle u_i \rangle$ is chosen of monomials to be holomorphic and orthonormal at $0$ dual to the frame $\langle e^{\alpha}=\sqrt{l!/\alpha!}e_1^{\alpha_1}...e_r^{\alpha_r}\rangle$. The scaling of the basis in $S^lV^*$ is to make the frame to be orthonormal and are calculated as follows;

\begin{equation}
\langle e^{\alpha}, e^{\beta} \rangle=\langle \sqrt{l!/\alpha!}e_1^{\alpha_1}...e_r^{\alpha_r},\sqrt{l!/\beta!}e_1^{\beta_1}...e_r^{\beta_r} \rangle = \sqrt{1/\alpha! \beta!}\langle \prod_{i=1}^le_{\eta(i)}, \sum_{\sigma \in S_l} \prod_{i=1}^le_{\eta \circ \sigma(i)} \rangle 
\end{equation}

\noindent
via the embedding $S^lV^* \hookrightarrow V^{\otimes l}$ and the map $\eta:\{1....l\} \to \{1....r\}$ taking the value $i$ the $\alpha_i$ times. 

Toward a Morse estimate with the coefficients of the curvature tensor we proceed as follows. Because the frame $\langle e_{\lambda} \rangle$ of $V$ were chosen to be orthonormal at the given point $x \in X$, substituting 

\begin{equation}
\langle e_{\lambda} , e_{\mu} \rangle=\delta_{\lambda \mu}+ \sum_{ij\lambda \mu}c_{ij\lambda \mu} z_i\bar{z}_j+...
\end{equation} 

\noindent
It follows that 

\begin{equation}
\langle e^{\alpha}, e^{\beta} \rangle= \sqrt{1/\alpha! \beta!} \left (\delta_{\alpha \beta} + \sum_{\eta \circ \sigma(i)=\eta(i)} c_{ij\alpha_{\eta(i)}\beta_{\eta(i)}}z_i\bar{z}_j+... \right )
\end{equation}

\noindent
The strategy is to find the scalars $C_{iju_{\alpha}u_{\beta}}$ in terms of curvature of the metric on $V$, in order to examine an estimation of the volume 
\begin{equation}
\int_{X_k} \Theta^{n+k(r-1)} = \frac{(n+k(r-1))!}{n!(k(r-1))!}\int_X \int_{\mathbb{P}(1^{[r]},...,k^{[r]})} \Theta_{\text{vert}}^{k(r-1)}\Theta_{hor}^n
\end{equation}

\noindent
to be positive. However the calculations with $\Theta$ involves more complicated estimates. We pose the question of existence of a positive lower bound for the global sections of the bundle in (3.3) as a step toward the Green-Griffiths conjecture. 

\begin{remark}
In \cite{DR} the existence of dual sections to $H^0(X,E_{k,m}^{GG}V^*)$ for $m \gg k \gg 0$ has been proved using Morse inequalities, involving higher cohomologies of $X_k$ with similar coefficients, cf. loc. cit.. 
\end{remark}

\end{document}